 \newtheorem{thm}{Theorem}[section]
 \newtheorem{cor}[thm]{Corollary}
 \newtheorem{lem}[thm]{Lemma}
 \newtheorem{prop}[thm]{Proposition}
 \theoremstyle{definition}
 \newtheorem{defn}[thm]{Definition}
 \theoremstyle{remark}
 \newtheorem{rem}[thm]{Remark}
 \newtheorem*{ex}{Example}
 \numberwithin{equation}{section}
 \newtheorem{cl}[thm]{Claim}
 \newcommand{\M}{\mathcal{M}}
 \newcommand{\X}{\mathbb{X}}
 \newcommand{\XX}{\mathbb{X}}
 \newcommand{\PP}{\mathbb{P}}
\title[Waring-like decompositions of polynomials, 1]
 {Waring-like decompositions of polynomials, 1}
\author[M.V.  Catalisano]{Maria V. Catalisano}
\address[M.V. Catalisano]{Dipartimento di Ingegneria Meccanica, Energetica, Gestionale e dei Trasporti, Universit\`{a} degli studi di Genova, Genoa, Italy.}
\email{catalisano@diptem.unige.it}
\author[L. Chiantini]{Luca Chiantini}
\address[L. Chiantini]{Dipartimento di Ingegneria dell'Informazione e Scienze Matematiche,  Universit\`{a} di Siena, Italy.}
\email{luca.chiantini@unisi.it}
\author[A.V. Geramita]{Anthony V. Geramita$^*$}
\address[A.V. Geramita]{Department of Mathematics and Statistics, Queen's University, King\-ston, Ontario, Canada}
\email{Anthony.Geramita@gmail.com \\ geramita@dima.unige.it  }
\thanks{\it $^*$During the submission of this paper, Tony Geramita passed away. In memory of a friend, besides a colleague and a mentor, the three remaining authors dedicate this paper to him.}
\author[A.Oneto]{Alessandro Oneto}
\address[A. Oneto]{Inria Sophia Antipolis M\'editerran\'ee, Sophia Antipolis, France.}
\email{alessandro.oneto@inria.fr}
\subjclass[2010]{14Q20, 13P05, 14M99, 14Q15}
\keywords{Waring problems, polynomials, secant varieties}
 \dedicatory{In memory of Tony Geramita (1942--2016)}
\begin{document}

\begin{abstract}
 Let $f$ be a homogeneous form of degree $d$ in $n$ variables. A Waring decomposition of $f$ is a way to express $f$ as a sum of $d^{th}$ powers of linear forms. In this paper we consider the decompositions of a form as a sum of expressions, each of which is a fixed monomial evaluated at  linear forms. 
\end{abstract}

\maketitle

\section{Introduction}
Let $f\in k[x_1, \dots, x_n] = R = \oplus_{i \geq 0 } R_i$, ($k = \bar k$  and $n \geq 2$). The necessities of a given problem involving $f$ often make it useful to have different ways to {\it decompose} $f$.  E.g. in many computations it is useful to express a polynomial as a sum of monomials ordered in a specific way.  Different applications call for different kinds of decompositions.  The following papers give some interesting uses of some non-standard decompositions (see, e.g. \cite{henrion}, \cite{comon}, \cite{ComMour}, \cite{ChGe}).

One particular kind of decomposition that has received a great deal of attention is the {\it Waring decomposition}. This decomposition asks us to write
$f \in R_d$  in an efficient way as 
$$f = \sum _{i=1} ^{s} L_i^d
$$
where each $L_i$ is a linear form.  There is an extensive literature on this decomposition with many interesting applications (see \cite{ComMour},\cite{AH95},\cite{IaKa},\cite{LandsbergTeitler2010},
\cite{carcatgermonomi}).

In this paper we want to propose an extension of the notion of Waring decomposition. To explain the idea we introduce the following notation.

Fix an integer $d$. Let $ {\mathcal P}_r= \mathbb Z [Z_1, \dots, Z_r]$ and let $\mathcal M_{r,d}$ be the subset of all monomials such that
$$M = Z_1^{d_1} \cdots Z_r^{d_r}  $$
and  
\begin {itemize}
\item [$i)$] $d_i >0$, for all $ i$,
\item[$ii)$] $d_1+ \cdots +d_r =d$.
\end {itemize}

Of course, $i)$ and $ii)$  imply $r \leq d$.

\begin {defn} Let $M$ be as above and let $f \in R_d$. 

 $ i) $ An {\it $M$-decomposition of $f$ having length $s$} is an expression of the form 
$$f= \sum _{j=1}^s L_{1,j}^{d_1}\cdot L_{2,j}^{d_2} \cdots L_{r,j}^{d_r},$$
where the $L_{i,j}$ are linear forms.

$ii)$ The {\it $M$-rank} of $f$ is the least integer $s$ such that $f$ has an $M$-decomposition of length $s$.
\end {defn}


\begin{rem} \label {casinoti}
$i)$ If $M = Z_1^d$ then the $M$-rank of $f$ is known as the {\it Waring rank} of $f$.

$ii)$ Every $f \in R_d$ has an $M = Z_1^{d_1}\cdots Z_r^{d_r}$-decomposition of finite length for any choice of $M$. This is immediate from the fact that every $f \in R_d$ has finite Waring rank and
$$L^d = L^{d_1} \cdots  L^{d_r}.$$

\medskip

$iii)$ 
Two other special cases have received a great deal of attention recently: when $M=  Z_1 \cdots Z_r $  ($r \geq 2$) then the forms  in $R_r$ which have $M$-rank equal to 1  are called {\it split forms} or {\it completely decomposable forms}.  $M$-decompositions for these $M$ are considered in \cite{arrondobernardi},  \cite{abo}, \cite{magnifici}, \cite {shin2012} and \cite{torrance}.

When $M=  Z_1 ^{d-1} Z_2$, $M$-decompositions were first considered in \cite{CGG0} and then in \cite{BCGI2009} and \cite{ballico2005}.  They arose naturally from a consideration of the secant varieties of the tangential varieties to the Veronese varieties.  The recent work \cite{aboV} is a major contribution to this decomposition problem.
\medskip

$iv)$ For the case of binary forms (i.e. $n=2$) this problem has its roots in the very foundations of modern algebra.  Since the $M$-rank of a binary form is invariant under the usual $SL_2(k)$ action on $k[x_1,x_2]$, we see in the works of Cayley, Salmon, Sylvester (\cite {Cayley}, \cite {Sal3}, \cite{Syl}) the search for the invariants which characterize forms of $M$-rank 1 for special choices of $M$.

A modern treatment of these classical investigations (as well as advances on them) can be found in the lovely papers of Chipalkatti (see \cite {Chi02}, \cite {Chi03}, \cite {Chi04}, \cite {Chi04bis}, \cite {CaCh}).
The bibliographies in these papers give a quick entry into the classical literature on the subject.
\medskip

$v)$ The $M$-rank of a form obviously depends on $M$.  E.g. recall that the Waring rank of $xyz$ in $k[x,y,z]$ is 4 (see e.g. \cite{carcatgermonomi}) while if $M= Z_1^2Z_2$, the $M$-rank of $xyz$ is 2. To see this note that the $M$-rank is bigger than 1, but 
$$ 4xyz = x((y+z)^2-(y-z)^2)
.$$
\end{rem}

\medskip
\medskip
 
There is a geometric way of considering the problem of finding the $M$-rank of a polynomial $f \in R_d$, $M \in \mathcal M_{r,d}$, $M = Z_1^{d_1} \cdots Z_r ^{d_r}$.  

Let $\PP[R_1]$ be the projective space based on the $k$-vector space $R_1$. We define the morphism
$$
\varphi _M :\ \underbrace { \PP[R_1] \times \cdots \times \PP[R_1] } _{r \hbox { times}}\longrightarrow \PP[R_d]
\simeq \PP ^ { {d+n-1 \choose n-1 } -1} 
$$
by
$$\varphi _M ([L_1], \ldots, [L_r]) = [L_1^{d_1} L_2 ^{d_2} \cdots L_r^{d_r}],
$$
and denote the  image of $\varphi _M$ by $\mathbb X_M$.

\begin{rem} $i)$ Notice that when we have $ M= Z_1^d$, then $\mathbb X_M$ is precisely the $d^{th}$ Veronese embedding of $\PP(R_1) = \PP^{n-1}$ in $\PP(R_d)$.

$ii)$ In general, the variety $\mathbb X_M$ is a projection of an appropriate Segre-Veronese variety. However, we will not use that fact in this paper.

$iii)$ The equations which define the Veronese variety are well-known (see e.g. \cite{Pucci}). It would be interesting to find equations for the variety $\mathbb X_M$ when $M \neq Z_1^d$. (See, however, \cite{briand} for the variety of split forms).

\end{rem}

If $\mathbb X \subseteq \mathbb P^t$ is any projective variety, then 
$$ \sigma_s (\mathbb X) := \overline {\{ P \in \mathbb P^t\  | \ P \in < P_1, \ldots, P_s>, P_i \in \mathbb X ,\}}$$
is the {\it $s^{th}$-secant variety}  of $\mathbb X$.
$$ \sigma_1 (\mathbb X) = \mathbb X,$$
$$ \sigma_2 (\mathbb X) ={\hbox { secant line variety to }}  \mathbb X, \ \hbox { etc}.$$

\begin{rem} When ${\mathbb X} = {\mathbb X}_M$ and $M = Z_1^d$, so $\mathbb X$ is the $d^{th}$ Veronese embedding of $\PP^n$   then, if $F \in R_d$ has Waring rank $s$ we  have  $[F] \in \sigma_{s}({\mathbb X})$.  In particular, if $s$ is the least integer for which $\sigma_s({\mathbb X})= \PP(R_d)$ then the generic element $[F] \in \PP(R_d)$ has Waring rank $s$.

This is the fundamental connection between the algebraic problem of finding the Waring rank of a generic form and the geometric problem of finding the dimensions of secant varieties to Veronese varieties.
\end{rem}

\medskip
One can ask about the dimensions of the secant varieties of any projective variety.    More precisely: given $ \mathbb X \subset \PP^t$ what is $\dim  \sigma_s(\mathbb X)$ for $s \geq 2$?

There is a reasonable guess which gives an upper bound for $\dim  \sigma_s(\mathbb X)$.  It is obtained by counting parameters and observing that $\sigma_s(\mathbb X) \subseteq \mathbb P^t$, namely
\begin{equation}\label{eq:inequality expected dimension}
\dim  \sigma_s(\mathbb X) \leq \min \{ s \dim \mathbb X + s -1, t\}.  
\end{equation}
If we have equality in \eqref{eq:inequality expected dimension} for some $s$, then we say that $ \sigma_s(\mathbb X)$ has the {\it expected dimension}, while if  \eqref{eq:inequality expected dimension}  is a strict inequality for some $s$, we say that $ \sigma_s(\mathbb X)$ is  {\it defective} and the difference 
$$ \min \{ s \dim \mathbb X + s -1, t\} - \dim  \sigma_s(\mathbb X),$$
is called the $s$-{\it defect} of $\mathbb X$.

If $\mathbb X \subseteq \mathbb P^t$ is non-degenerate then $\sigma_s(\mathbb X) =\mathbb P^t$ for some $s$ and so the $s$-{defect} is eventually zero, for all $s \gg 0$. 

The particular problem we consider in this paper is that of finding  $\dim  \sigma_s(\mathbb X_M)$ for $R= k[x_1, \ldots.x_n]$ and any $M \in \mathcal M_{r,d}$.

The paper is organized in the following way. In Section 2 we recall Terracini's Lemma, which is our main tool in fi
nding $\dim  \sigma_s(\mathbb X_M)$.
Terracini's Lemma needs a description of the tangent space at a general point of $\mathbb X_M$. This tangent space corresponds to a vector space which is the graded piece of an ideal $I$ in $R$.  We use information about this ideal to find the dimensions we need. 

 In the third section we find the dimensions of all the secant varieties of $\XX_M$ for any $M \in {\mathcal M}_{r,d}$, for any $r$ and any $d$, in case $n=2$, i.e. for binary forms.  We find that there are no defective secant varieties in this case.

In the fourth section we find the dimensions of the secant line varieties of $\X_M$ for any $n$ and for any $M \in {\mathcal M}_{r,d}$ for any $r$ and any $d$.  In this family of cases we find exactly one defective secant line variety.

In the final section  we use  results of the previous sections and specialization in order to prove the non-defectiveness of new secant varieties.

\section {Preliminaries}
Since the cases $r=1$ and $d=2$  were already treated and solved (see Remark \ref {casinoti}), in this paper  we  assume $r\geq 2$ and $d \geq 3$.

We begin by recalling the Lemma of Terracini \cite {Terracini}.
\begin{lem} Let $\mathbb X \subseteq \mathbb P^t$ be a projective variety and let
$P \in \sigma_s(\mathbb X)$ be a general point.  If 
$$P \in <P_1, \ldots, P_s>$$
where $P_1, \ldots, P_s$ are general points of  $\mathbb X$, then the tangent space to $\sigma_s(\mathbb X)$
at $P$ is the linear span of tangent spaces to $\mathbb X$ at $P_1, \ldots, P_s$, i.e.,
$$T_P(\sigma_s(\mathbb X) )= <T_{P_1}(\mathbb X), \ldots, T_{P_s}(\mathbb X)>.$$
\end{lem}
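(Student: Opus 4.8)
The plan is to prove Terracini's Lemma by exhibiting an explicit rational parametrization of $\sigma_s(\mathbb{X})$ and computing the image of its differential at a general point, working throughout with affine cones so that projective tangent spaces become honest linear subspaces. Write $m = \dim \mathbb{X}$ and let $\hat{\mathbb{X}} \subseteq \mathbb{A}^{t+1}$ be the affine cone over $\mathbb{X}$. Since a general point of a variety is smooth, the general points $P_1, \ldots, P_s \in \mathbb{X}$ are smooth, and I would choose local (analytic or \'etale) parametrizations $u \mapsto v_i(u)$ of $\hat{\mathbb{X}}$, where $u = (u_1, \ldots, u_m)$ ranges over a neighbourhood of the origin and $v_i(0)$ lies over $P_i$.

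First I would set up the parametrizing map. Consider
$$ \Psi(\lambda_1, u^{(1)}, \ldots, \lambda_s, u^{(s)}) = \sum_{i=1}^s \lambda_i\, v_i(u^{(i)}), $$
with the $\lambda_i$ scalars. As the $(\lambda_i, u^{(i)})$ vary, the image of $\Psi$ is dense in the affine cone $\widehat{\sigma_s(\mathbb{X})}$, because by definition $\sigma_s(\mathbb{X})$ is the closure of the union of the spans $\langle P_1, \ldots, P_s\rangle$ of $s$ points of $\mathbb{X}$. The chosen general point $P$ corresponds to a general point of the domain, say with $u^{(i)} = 0$ and generic $\lambda_i$, so that $\hat{P} = \sum_i \lambda_i \hat{P}_i$ with $\hat{P}_i = v_i(0)$.

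Next I would compute the differential $d\Psi$ at this point. The partial derivative in $\lambda_i$ is $v_i(0) = \hat{P}_i$, which spans the cone over $P_i$; the partial derivative in $u^{(i)}_j$ is $\lambda_i\, \partial v_i/\partial u_j(0)$, and as $j$ varies these, together with $\hat{P}_i$, span the affine tangent space $\hat{T}_{P_i}(\mathbb{X})$ (here one uses $\lambda_i \neq 0$, which holds for general $P$). Hence the image of $d\Psi$ is exactly $\hat{T}_{P_1}(\mathbb{X}) + \cdots + \hat{T}_{P_s}(\mathbb{X})$, whose projectivization is $\langle T_{P_1}(\mathbb{X}), \ldots, T_{P_s}(\mathbb{X})\rangle$.

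The remaining, and genuinely delicate, step is to identify the image of $d\Psi$ with the actual tangent space $T_{\hat{P}}\bigl(\widehat{\sigma_s(\mathbb{X})}\bigr)$. In general the image of the differential of a dominant map is only \emph{contained} in the tangent space to the closure of the image; equality at a general point of the source follows from generic smoothness of the dominant map $\Psi$ onto its image. This is precisely where the hypothesis that $P$ (equivalently $\hat{P}$) be general is essential, and where one invokes the fact that over an algebraically closed field of characteristic zero a dominant morphism is smooth over a dense open subset of the target. Granting this, the two spaces coincide, and projectivizing yields $T_P(\sigma_s(\mathbb{X})) = \langle T_{P_1}(\mathbb{X}), \ldots, T_{P_s}(\mathbb{X})\rangle$. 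Thus the main obstacle is not the derivative computation, which is routine, but the justification that generically the tangent space to the image is computed by the differential — the one place where genericity of $P$ and smoothness of the parametrization are really used.
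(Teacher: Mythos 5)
The paper does not prove this statement at all: it is Terracini's Lemma, recalled as a classical result with a bare citation to Terracini's collected works, so there is no in-paper argument to compare yours against. Your proof is the standard modern one and is essentially correct: parametrize the cone over $\sigma_s(\mathbb X)$ by $\sum_i \lambda_i v_i(u^{(i)})$ using local parametrizations at the smooth points $P_i$, observe that the image of the differential is $\hat T_{P_1}(\mathbb X)+\cdots+\hat T_{P_s}(\mathbb X)$, and upgrade the a priori inclusion of this image in $T_{\hat P}$ to an equality via generic smoothness of the dominant map onto $\sigma_s(\mathbb X)$. You correctly isolate the genuinely delicate step, and your remark that this invokes characteristic zero is exactly right --- Terracini's Lemma can fail in positive characteristic, and the paper's blanket hypothesis $k=\bar k$ leaves this implicit. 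Two minor points you could tighten: the inclusion of the scalars $\lambda_i$ as extra parameters is redundant, since the affine tangent space to a cone at $\hat P_i$ already contains the line through $\hat P_i$; and to make ``the image of $\Psi$ is dense in $\widehat{\sigma_s(\mathbb X)}$'' airtight one should really work with the algebraic incidence variety $\overline{\{(Q_1,\ldots,Q_s,Q): Q\in\langle Q_1,\ldots,Q_s\rangle\}}\subseteq \mathbb X^s\times\mathbb P^t$ and its projection, rather than with analytic neighbourhoods, so that genericity and constructibility of images are available in the Zariski topology. Neither point is a genuine gap.
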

To apply Terracini's Lemma to our situation we first need to calculate $T_{P}(\mathbb X_M)$ for a general $P$ in $ \mathbb X_M$.

\begin{prop} \label {spaziotang}
Let $R= k[x_1,\ldots,x_n]$ and let $M \in \mathcal M_{r,d}$, 
$$M= Z_1^{d_1}\cdots Z_r^{d_r}.$$
Let $L_1$,...,$L_r$ be general linear forms in $R_1$ so that $P= [L_1^{d_1}\cdots L_r^{d_r}]$ is a general point of $\mathbb X_M= \varphi_M(
\underbrace { \mathbb P^{n-1} \times \cdots \times \mathbb P^{n-1}} _{r \hbox { times}}) \subseteq \mathbb P^{{d+n-1 \choose n-1}-1}$.

If
$$F= L_1^{d_1}\cdots L_r^{d_r} \hbox { and }  I_P =\left ({F / L_1}, \ldots,{F / L_r}\right ) = \oplus_{j\geq 0}(I_P)_j$$
then
$$T_P( \mathbb X_M) = \mathbb P(  \left ( I_P \right ) _d ).$$
\end{prop}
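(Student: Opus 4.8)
The plan is to reduce the computation of the projective tangent space to a computation of the differential of the parametrizing map on affine cones. Since $\mathbb{X}_M$ is, by definition, the image of the morphism $\varphi_M$, I would first pass to affine cones: let $\hat{\mathbb{X}}_M \subseteq R_d$ be the affine cone over $\mathbb{X}_M$ and let
$$\Phi \colon \underbrace{R_1 \times \cdots \times R_1}_{r} \longrightarrow R_d, \qquad \Phi(L_1, \dots, L_r) = L_1^{d_1} \cdots L_r^{d_r},$$
be the associated multihomogeneous map, whose image is dense in $\hat{\mathbb{X}}_M$. For a general point $P = [F]$, which is then a smooth point of $\mathbb{X}_M$, one has $T_P(\mathbb{X}_M) = \mathbb{P}\big(\hat{T}_F \hat{\mathbb{X}}_M\big)$, and generic smoothness of $\Phi$ onto its image gives $\hat{T}_F \hat{\mathbb{X}}_M = \operatorname{im} d\Phi_{(L_1,\dots,L_r)}$. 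So it suffices to compute this image and identify it with $(I_P)_d$.

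Next I would compute the differential explicitly. Differentiating the curve $t \mapsto \prod_{i=1}^r (L_i + t N_i)^{d_i}$ at $t = 0$, for arbitrary $N_i \in R_1$, the Leibniz rule yields
$$d\Phi(N_1, \dots, N_r) = \sum_{i=1}^r d_i\, L_1^{d_1} \cdots L_i^{d_i - 1} N_i \cdots L_r^{d_r} = \sum_{i=1}^r d_i \, \frac{F}{L_i}\, N_i.$$
Hence $\operatorname{im} d\Phi = \big\{ \sum_{i=1}^r d_i (F/L_i) N_i : N_i \in R_1 \big\} \subseteq R_d$.

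Finally I would match this subspace with $(I_P)_d$. In characteristic zero --- or whenever none of the $d_i$ vanishes in $k$ --- each $d_i$ is invertible and can be absorbed into $N_i$, so $\operatorname{im} d\Phi = \sum_{i=1}^r (F/L_i)\, R_1$. Since every generator $F/L_i$ of the ideal $I_P = (F/L_1, \dots, F/L_r)$ has degree exactly $d-1$, the degree-$d$ graded piece $(I_P)_d$ is precisely $\sum_{i=1}^r (F/L_i)\, R_1$, with no contribution from products of lower degree. Therefore $\operatorname{im} d\Phi = (I_P)_d$, and projectivizing gives $T_P(\mathbb{X}_M) = \mathbb{P}((I_P)_d)$, as claimed.

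The step I expect to require the most care is the passage from the differential to the tangent space. The inclusion $\operatorname{im} d\Phi \subseteq \hat{T}_F \hat{\mathbb{X}}_M$ holds at any point, but the reverse inclusion --- that the image of the differential fills out the entire affine tangent space --- relies on $P$ being a general, hence smooth, point together with generic smoothness of the morphism; this is where the standing hypothesis that $L_1, \dots, L_r$ are general, and the characteristic assumption making the $d_i$ invertible, are genuinely used. Once these are in place, the remainder is the routine Leibniz-rule computation above.
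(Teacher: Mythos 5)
Your proposal is correct and follows essentially the same route as the paper: both differentiate the parametrization $\prod_i(L_i+tN_i)^{d_i}$ at $t=0$ and identify the resulting span $\sum_i (F/L_i)R_1$ with $(I_P)_d$ using that $I_P$ is generated in degree $d-1$. You are in fact somewhat more careful than the paper, which silently drops the coefficients $d_i$ and does not spell out why the image of the differential fills the whole tangent space at a general point; your appeal to generic smoothness and to the invertibility of the $d_i$ supplies exactly those missing justifications.
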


\begin {proof}
Since $P = \varphi_M ([L_1],\ldots ,[L_r])$, the image of a line through the point 
$([L_1],\ldots ,[L_r])$ in the direction $([\widetilde L_1], \ldots , [\widetilde L_r])$ is the curve on the  variety $\mathbb X_M$ whose points are parameterized by 
$$
[ (L_1 + \lambda \widetilde L_1)^{d_1} \cdots (L_r +  \lambda \widetilde L_r)^{d_r}].$$
The tangent vector to this curve at $P$ is given by the coefficient of $\lambda$ in this last expression, that is,
$$ \left({F / L_1} \right) \widetilde L_1 + \cdots +   \left({F / L_r} \right) \widetilde L_r    .$$
These, for varying choices of the $\widetilde L_i$, give that the tangent vectors at $P$ are precisely the degree $d$ piece of the ideal generated by the $F / L_i$.

\end{proof}

\begin {rem} \label{remark1} 
$i)$
Note that if $M= Z_1^{d_1}\cdots Z_r^{d_r}$ and we are considering forms in $k[x_1,\dots,x_n]$, then
$\mathbb X_M= \varphi_M(
\underbrace { \mathbb P^{n-1} \times \cdots \times \mathbb P^{n-1}} _{r \hbox { times}})$,  Since the generic fibre of $\varphi _M$ is finite, 
the dimension of  $T_P( \mathbb X_M) $ is $r(n-1)$.

$ii)$
It is easy to see that if $F$ and the $L_i$ are as above, then 
 $$I_P =\left (F / L_1, \ldots, F / L_r\right ) $$
 \begin{equation}\label{eq:ideal}
 = (L_1^{d_1-1} \cdots L_r^{d_r-1}) \cdot 
( L_2L_3 \cdots L_r, L_1L_3 \cdots L_r,\ldots,  L_1L_2 \cdots  L_{r-1}) \ .
\end{equation}

\end{rem}

\begin {cor}[for binary forms]  \label{potenza2var} Let $R = k[x_1,x_2]$, $M \in {\mathcal M}_{r,d}$ 
{\rm(}$r \geq 2${\rm)}
$$ M = Z_1^{d_1}\cdots Z_r^{d_r}.$$
If  $L_1,L_2,  \ldots , L_r$ are general linear forms in $R_1$, $P=[L_1^{d_1}\ldots L_r^{d_r}] \in {\mathbb X}_M$ and we set 
$I^\prime$ to be the principal ideal
$$I^\prime:= (L_1^{d_1-1}\cdots L_r^{d_r-1})$$
then we have
$$T_P({\mathbb X}_M)= \PP(I^\prime_d).$$  
 
\end {cor}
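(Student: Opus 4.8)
The plan is to start from Proposition~\ref{spaziotang}, which gives $T_P(\mathbb{X}_M) = \PP((I_P)_d)$ for $I_P = (F/L_1,\dots,F/L_r)$, and to compare this ideal with the principal ideal $I' = (L_1^{d_1-1}\cdots L_r^{d_r-1})$. Write $G := L_1^{d_1-1}\cdots L_r^{d_r-1}$, so that $I' = (G)$ and, by the factorization~\eqref{eq:ideal} recorded in Remark~\ref{remark1}, $I_P = G\cdot J$ where $J = (L_2\cdots L_r,\ L_1 L_3\cdots L_r,\ \dots,\ L_1\cdots L_{r-1})$ is generated by the $r$ complementary products $g_j := \prod_{i\neq j} L_i$, each of degree $r-1$. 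Since $J \subseteq R$ we get $I_P \subseteq I'$, hence $(I_P)_d \subseteq (I')_d$ for free; the whole content of the corollary is the reverse inclusion $(I')_d \subseteq (I_P)_d$.

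Next I would reduce this to a statement in a single degree. Because $\deg G = (d_1-1)+\cdots+(d_r-1) = d-r$ and $R = k[x_1,x_2]$ is a domain, multiplication by the nonzero form $G$ is injective and homogeneous, so $(I')_d = G\cdot R_r$ and $(I_P)_d = G\cdot J_r$. Thus it suffices to prove $J_r = R_r$. Since $J$ is generated in degree $r-1$ by $g_1,\dots,g_r$, I would reduce further to showing that these $r$ forms span the whole space $R_{r-1}$, which has dimension $r$: once $J_{r-1} = R_{r-1}$, multiplying by $x_1$ and $x_2$ yields $R_r = R_1 R_{r-1} \subseteq J_r$, while $J_r\subseteq R_r$ is clear.

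The key step, and the only place where generality of the $L_i$ enters, is the linear independence of the complementary products $g_1,\dots,g_r$. For general $L_i$ the linear forms are pairwise non-proportional, so each $L_j$ vanishes at a distinct point $p_j \in \PP^1$ at which all the other $L_i$ are nonzero. Evaluating a relation $\sum_j c_j g_j = 0$ at $p_j$ kills every term with $i\neq j$ (each such $g_i$ contains the factor $L_j$) and leaves $c_j\, g_j(p_j) = 0$ with $g_j(p_j)\neq 0$, forcing $c_j = 0$; doing this for every $j$ shows the $g_j$ are independent, hence a basis of $R_{r-1}$. This closes the argument, and I expect this span/independence computation to be the substantive point.

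Finally, there is a quicker way to conclude once the inclusion $(I_P)_d\subseteq (I')_d$ is in hand, which I would at least record: by Remark~\ref{remark1} the tangent space $T_P(\mathbb{X}_M)$ has dimension $r(n-1) = r$ for $n=2$, so $\dim_k (I_P)_d = r+1$, while $\dim_k (I')_d = \dim_k R_r = r+1$ as computed above. An inclusion of vector spaces of equal finite dimension is an equality, giving $(I_P)_d = (I')_d$ directly. This dimension count is essentially a bookkeeping shortcut that sidesteps the independence of the $g_j$ by borrowing the tangent-space dimension from Remark~\ref{remark1}.
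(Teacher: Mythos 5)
Your proposal is correct, and its skeleton is the same as the paper's: both reduce, via the factorization \eqref{eq:ideal}, to showing that the ideal $J$ generated by the complementary products $g_j=\prod_{i\neq j}L_i$ fills up $R_{r-1}$ (the paper phrases this as Claim \ref{claim}, $J=(x_1,x_2)^{r-1}$). Where you genuinely differ is in the proof of that key fact: the paper argues by induction on $r$, peeling off $L_r$ and observing that the general form $L_1\cdots L_{r-1}$ does not lie in the $r{-}1$-dimensional space $L_r(x_1,x_2)^{r-2}$, whereas you prove directly that the $g_j$ are linearly independent by evaluating a relation $\sum_j c_j g_j=0$ at the zero $p_j\in\PP^1$ of $L_j$. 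Your evaluation argument is shorter, makes explicit that only pairwise non-proportionality of the $L_i$ is needed, and avoids the induction; the paper's version buys the slightly stronger statement that $J$ equals the full power $(x_1,x_2)^{r-1}$ in every degree, though only the degree-$(r-1)$ piece is used. Your secondary ``bookkeeping'' route --- comparing $\dim_k(I_P)_d=r(n-1)+1=r+1$ from Remark \ref{remark1} with $\dim_k(I')_d=r+1$ and invoking the inclusion $(I_P)_d\subseteq(I')_d$ --- is also valid and does not appear in the paper, but it rests on Remark \ref{remark1}$(i)$ (finiteness of the general fibre of $\varphi_M$), which the paper asserts without a detailed proof; so you are right to keep the self-contained independence argument as the primary one.
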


\begin {proof} In view of equation \eqref{eq:ideal} above we first consider the ideal 
$$
J:=(L_2L_3\cdots L_r, L_1L_3\cdots L_r, \ldots , L_1L_2\cdots L_{r-1} ).
$$

\begin{cl}\label{claim} $J = (x_1,x_2)^{r-1}$
\end{cl}

\begin{proof} (of the Claim)  
By induction on $r$. Obvious for $r=2$ so let let $r>2$. Since
$$J = (L_r\cdot (L_2L_3 \cdots L_{r-1} , L_1L_3 \cdots L_{r-1} ,\ldots,  L_1L_2 \cdots  L_{r-2}) , 
L_1L_2 \cdots  L_{r-1}),$$
 we have, by the induction hypothesis, that 
$$J = (L_r\cdot (x_1,x_2)^{r-2} ,  L_1L_2 \cdots  L_{r-1}).$$
Since $R = k[x_1,x_2]$, $L_1L_2 \cdots  L_{r-1}$ is a general form in $R_{r-1}$, hence not in the space $L_r(x_1,x_2)^{r-2}$.  This last implies that
 $\dim J_{r-1} = r$.  Since the ideal $J$ begins in degree $r-1$ we are done with the proof of the claim. 
\end{proof}

Now, using Claim \ref{claim}, equation \eqref{eq:ideal} and Proposition \ref{spaziotang}
we have that 
$$
I_P = (L_1^{d_1-1}\cdots L_r^{d_r-1})(x_1,x_2)^{r-1} \subseteq (L_1^{d_1-1}\cdots L_r^{d_r-1})
$$
and since $(x_1,x_2)^{r-1} = \oplus_{j \geq r-1}R_j$, we have that 
$$
(I_P)_d = (L_1^{d_1-1}\cdots L_r^{d_r-1})_d 
$$

\end{proof}

\begin {cor}\label {tangrvar}
Let $M= Z_1^{d_1}\cdots Z_r^{d_r} \in \mathcal M_{r,d}$.
Let  
$P = \varphi_M([L_1] , \cdots , [L_r])$ be a general point of $\mathbb X_M$, where the $L_i$ are general 
linear forms in $k[x_1,\ldots,x_n]$ and let $I_P$ be as in Proposition \ref {spaziotang}. 
Then,
$$I_P = (L_1^{d_1-1} \cdots L_r^{d_r-1}) \cap (\cap_{1 \leq i <j\leq r} (L_i,L_j)^{d_i+d_j-1} )
.$$

\end {cor}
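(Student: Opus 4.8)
The plan is to prove the two inclusions of the asserted identity separately, after first rewriting $I_P$ in the factored form provided by \eqref{eq:ideal}. Write $G := L_1^{d_1-1}\cdots L_r^{d_r-1}$ and, for $1\le i\le r$, $\widehat{L_i}:=\prod_{k\ne i}L_k$, so that by \eqref{eq:ideal} we have $I_P = G\cdot J$ with $J=(\widehat{L_1},\dots,\widehat{L_r})$. I will assume $n\ge 3$, since for $n=2$ every pair $(L_i,L_j)$ is the irrelevant maximal ideal and the appropriate clean statement is the one already recorded in Corollary \ref{potenza2var}. The inclusion $I_P\subseteq (G)\cap\bigcap_{i<j}(L_i,L_j)^{d_i+d_j-1}$ is the routine direction: each generator $F/L_k = G\,\widehat{L_k}$ is visibly a multiple of $G$, and counting vanishing orders shows that $L_i,L_j$ together divide $F/L_k$ to total order at least $d_i+d_j-1$ for every pair $i<j$ (order $d_i+d_j$ when $k\notin\{i,j\}$, and exactly $d_i+d_j-1$ when $k\in\{i,j\}$), whence $F/L_k\in (L_i,L_j)^{d_i+d_j-1}$.

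For the reverse inclusion I would pass to a colon ideal. Since $G$ is a nonzero element of the domain $R$, it is a nonzerodivisor, and for any ideal $\mathcal I$ one has $(G)\cap \mathcal I = G\,(\mathcal I : G)$; applying this with $\mathcal I=\bigcap_{i<j}(L_i,L_j)^{d_i+d_j-1}$ and using that colon commutes with intersection reduces everything to computing $(L_i,L_j)^{d_i+d_j-1}:G$ for a fixed pair. Here genericity enters: for $n\ge 3$ the ideal $\mathfrak p_{ij}:=(L_i,L_j)$ is a height-two prime, each factor $L_m$ with $m\notin\{i,j\}$ lies outside $\mathfrak p_{ij}$, and $(L_i,L_j)^{d_i+d_j-1}$ is $\mathfrak p_{ij}$-primary; hence coloning by the unit-modulo-$\mathfrak p_{ij}$ part $\prod_{m\ne i,j}L_m^{d_m-1}$ changes nothing, leaving $(L_i,L_j)^{d_i+d_j-1}:\bigl(L_i^{d_i-1}L_j^{d_j-1}\bigr)$. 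Extending $L_i,L_j$ to a linear system of coordinates, this reduces to the monomial identity $(u,v)^{e}:(u^{a}v^{b})=(u,v)^{e-a-b}$ with $e=d_i+d_j-1$, $a=d_i-1$, $b=d_j-1$, which collapses to $(L_i,L_j)$ precisely because of the $-1$ in the exponent. Thus $(G)\cap\bigcap_{i<j}(L_i,L_j)^{d_i+d_j-1}=G\cdot\bigcap_{i<j}(L_i,L_j)$, and it remains only to identify $\bigcap_{i<j}(L_i,L_j)$ with $J$.

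This last identification is the heart of the matter and the step I expect to be the main obstacle, as the inclusion $\bigcap_{i<j}(L_i,L_j)\subseteq J$ is not formal. I would prove $\bigcap_{i<j}(L_i,L_j)=(\widehat{L_1},\dots,\widehat{L_r})$ by induction on $r$, the case $r=2$ being trivial. For the inductive step I split off the last form: a computation modulo $L_r$, where the images $\overline{L_i}$ are pairwise coprime linear forms, gives $\bigcap_{i<r}(L_i,L_r)=(L_r,\widehat{L_r})$, so that $\bigcap_{i<j\le r}(L_i,L_j)=K\cap (L_r,\widehat{L_r})$ with $K:=\bigcap_{i<j\le r-1}(L_i,L_j)$ known by the inductive hypothesis. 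Writing a general element of $K\cap(L_r,\widehat{L_r})$ as $\alpha L_r+\beta\widehat{L_r}$ and using $\widehat{L_r}\in K$, the crux is that $L_r$ is a nonzerodivisor modulo $K$: the associated primes of the radical ideal $K$ are exactly the $(L_i,L_j)$ with $i<j\le r-1$, and a general $L_r$ avoids all of them when $n\ge 3$. This forces $\alpha\in K$ and yields $K\cap(L_r,\widehat{L_r})=L_rK+(\widehat{L_r})=(\widehat{L_1},\dots,\widehat{L_r})$ (using $\widehat{L_i}=L_r\widehat{L_i}{}'$ for $i<r$), completing the induction. Combining the three steps gives $(G)\cap\bigcap_{i<j}(L_i,L_j)^{d_i+d_j-1}=G\cdot J=I_P$, as claimed. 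Alternatively, one may bypass the induction by invoking the known description of the codimension-two star configuration ideal of $r$ general hyperplanes for this final identity.
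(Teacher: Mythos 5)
Your proof is correct and follows essentially the same route as the paper: the easy inclusion by inspecting the generators $F/L_k$, the reverse inclusion by factoring out $G=L_1^{d_1-1}\cdots L_r^{d_r-1}$ and exploiting that $(L_i,L_j)^{d_i+d_j-1}$ is $(L_i,L_j)$-primary with $G$ outside it (your colon-ideal computation is just a cleaner packaging of the paper's ``$h\notin (L_i,L_j)^{d_i+d_j-1}$, hence $g$ lies in the radical'' step), followed by the identification $\bigcap_{i<j}(L_i,L_j)=(\widehat{L_1},\dots,\widehat{L_r})$. The only differences are to your credit: the paper asserts that final star-configuration identity without proof, where you supply the induction, and you correctly flag that the argument requires $n\ge 3$ --- a hypothesis the paper leaves implicit (it only applies the corollary when $n\ge 3$), and which is genuinely needed, since for $n=2$, $r=3$, $M=Z_1Z_2Z_3$ one has $I_P=(x_1,x_2)^2$ while the right-hand side is $(x_1,x_2)$.
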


\begin {proof}By Remark \ref{remark1} we need to prove that the two ideals
$$
I_P=(L_1^{d_1-1}\cdots L_r^{d_r-1})\cdot (L_2L_3\cdots L_r, \ldots , L_1\cdots L_{r-1})
$$
and
$$
J=(L_1^{d_1-1} \cdots L_r^{d_r-1}) \cap (\cap_{1 \leq i <j\leq r} (L_i,L_j)^{d_i+d_j-1} )
$$
are equal.

Since each generator of $I_P$ is in $J$, we have $I_P \subseteq J$.

Now let $h= L_1^{d_1-1}\cdots L_r^{d_r-1}$, and suppose that $f=hg \in J$.  Since the $L_i$ are general linear forms we have 
$$
h \in (L_i,L_j)^{d_i + d_j -2} \hbox{ \ \ for every $1 \leq i<j \leq r$},
$$
but
$$
h \notin (L_i,L_j)^{d_i + d_j - 1} \hbox{ \ \ for every $1 \leq i<j \leq r$},
$$
so $g \in rad((L_i,L_j)^{d_i + d_j -1} )= (L_i,L_j)$ for all $1 \leq i<j \leq r$.

Since $\bigcap_{1 \leq i<j \leq r} (L_i, L_j) = (L_2\cdots L_r, \ldots , L_1\cdots L_{r-1})$ we are done.

\end{proof}

\section{The Binary Case}

For binary forms there is a simple theorem covering all cases.

\begin{thm}\label{Binary}
Let $R = k[x,y] = \oplus_{j\geq 0} R_j$ and let $M = Z_1^{d_1}\cdots Z_r^{d_r} \in {\mathcal M}_{r,d}$ for any $r$ and any $d$ with $r \leq d$.
Then $\sigma_s({\mathbb X}_M)$ has the expected dimension for every $s$, i.e.
$$
\dim \sigma_s({\mathbb X}_M) = \min\{s\dim {\mathbb X}_M + (s-1), d \} = \min \{sr +s-1, d \}
$$
for every $s$ and every $M$.
\end{thm}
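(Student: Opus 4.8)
The plan is to combine Terracini's Lemma with the tangent-space computation of Corollary \ref{potenza2var} and then reduce the whole statement to a single linear-algebra rank computation, which I would resolve by a degeneration. First I would fix general linear forms $L_{j,i}$ ($1\le j\le r$, $1\le i\le s$) in $R=k[x,y]$, set $G_i=L_{1,i}^{d_1-1}\cdots L_{r,i}^{d_r-1}$, a form of degree $e:=d-r$, and $P_i=[L_{1,i}^{d_1}\cdots L_{r,i}^{d_r}]\in \mathbb{X}_M$. By Corollary \ref{potenza2var} the affine cone over $T_{P_i}(\mathbb{X}_M)$ is the degree-$d$ part of the principal ideal $(G_i)$, namely $G_iR_r\subseteq R_d$, a subspace of dimension $r+1$. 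Hence, by Terracini's Lemma, $\dim\sigma_s(\mathbb{X}_M)+1$ equals the dimension of $V:=\sum_{i=1}^s G_iR_r$, i.e. the rank of the multiplication map $\mu\colon R_r^{\oplus s}\to R_d$, $(H_1,\dots,H_s)\mapsto\sum_i G_iH_i$. Since $\dim R_r=r+1$ and $\dim R_d=d+1$, one has automatically $\operatorname{rank}\mu\le\min\{s(r+1),d+1\}$, so it suffices to prove that $\mu$ attains this maximal rank for general choices of the $L_{j,i}$.

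The key observation is that $\operatorname{rank}\mu$ is a lower semicontinuous function of the parameters $([L_{1,1}],\dots,[L_{r,s}])\in(\mathbb{P}(R_1))^{rs}$, and that this parameter space is irreducible. Thus it is enough to exhibit a single choice of the $L_{j,i}$ for which the rank equals $\min\{s(r+1),d+1\}$. I would specialize all of the linear forms defining a given $G_i$ to coincide, i.e. take $L_{1,i}=\cdots=L_{r,i}=\ell_i$ with $\ell_1,\dots,\ell_s$ general and pairwise distinct. Then $G_i$ degenerates to the pure power $\ell_i^{\,e}$ — still a member of the family — and $V$ degenerates to $\sum_{i=1}^s \ell_i^{\,e}R_r$.

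It remains to compute $\dim\sum_i \ell_i^{\,e}R_r$, and here the problem becomes elementary on $\mathbb{P}^1$. The subspace $\ell_i^{\,e}R_r\subseteq R_d$ is the space of degree-$d$ forms vanishing to order $\ge e$ at the point $[\ell_i=0]$; by apolarity (contraction in the dual ring $k[X,Y]$) its orthogonal complement in $R_d^{*}$ is exactly the space of degree-$d$ forms vanishing to order $\ge r+1$ at the dual point $Q_i$, a space of dimension $d-r=e$. Intersecting these complements over $i$ yields the linear system of degree-$d$ forms vanishing to order $r+1$ at the $s$ general points $Q_1,\dots,Q_s$; since such a form must be divisible by the product of the $(r+1)$-st powers of the linear forms vanishing at the $Q_i$, this system has the expected dimension $\max\{0,(d+1)-s(r+1)\}$. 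Dualizing back gives $\dim\sum_i\ell_i^{\,e}R_r=\min\{s(r+1),d+1\}$, which is the a priori maximal value; semicontinuity then forces the same value for general $L_{j,i}$, and finally $\dim\sigma_s(\mathbb{X}_M)=\min\{s(r+1),d+1\}-1=\min\{sr+s-1,d\}$.

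I expect the main obstacle to be choosing the right degeneration rather than the subsequent computation. A naive collision (for instance letting all $sr$ linear forms tend to one common form) would collapse $V$ far too much and give nothing useful; the point is to collide the forms \emph{within each} $G_i$ while keeping the $s$ resulting powers supported at distinct general points, and then to check carefully that this degenerate configuration genuinely lies in the irreducible family over which $\operatorname{rank}\mu$ is semicontinuous. Once the degeneration is set up, the reduction to fat points on $\mathbb{P}^1$ makes the maximal-rank statement immediate, so essentially all of the content of the theorem is concentrated in this specialization step.
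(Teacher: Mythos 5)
Your proposal is correct and follows essentially the same route as the paper: Terracini's Lemma plus Corollary \ref{potenza2var} reduce the theorem to computing $\dim_k\bigl(\sum_i G_i R_r\bigr)$, and both you and the authors specialize each $G_i$ to a pure power $\ell_i^{\,d-r}$ of a general linear form and conclude by semicontinuity. The only difference is that the paper quotes the resulting dimension count for $(H_1^{d-r},\dots,H_s^{d-r})_d$ from \cite[Cor.~2.3]{GeSc}, whereas you reprove it directly by apolarity with fat points on $\mathbb{P}^1$; your computation is correct and self-contained.
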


\begin{proof}  Since every form in $R$ of degree $d$ splits as a product of linear forms and the general form of degree $d$ has no repeated factors, we conlude that for $M = Z_1\cdots Z_d$, ${\mathbb X}_M = \PP(R_d) = \PP^d$.  This takes care of the case $r = d$.  Now, for the rest of the proof, assume that $r < d$.

By Corollary \ref{potenza2var} we know that if $P=[L_1^{d_1}\cdots L_r^{d_r}]$ is a general point of ${\mathbb X}_M$ (where $L_1, \ldots , L_r$ are general in $R_1$) then 
$$
T_P({\mathbb X}_M) = \PP( (I^\prime_P)_d) \ \ \hbox{ where }\ \ I^{\prime}_P = (L_1^{d_1-1}\cdots L_r^{d_r-1}) .
$$

So, by Terracini's Lemma, if $P_1, \ldots , P_s$ are a set of $s$ general points of ${\mathbb X}_M$ then
$$
\dim(\sigma_s({\mathbb X}_M)) = \dim_k (I^\prime_{P_1} + \cdots  + I^\prime_{P_s} ) _d - 1
$$
where if $P_j = [L_{j1}^{d_1}\cdots L_{jr}^{d_r}]$ then $I^\prime_{P_j} = (L_{j1}^{d_1-1}\cdots L_{jr}^{d_r - 1} )$.

However,
 \cite[Cor.2.3]{GeSc} states that for the special points $Q_1 =[H_1^d], \ldots , Q_s =[H_s^d] \in {\mathbb X}_M$ (where the $H_j$ are general in $R_1$) and for the ideal $J = (H_1^{d-r}, \ldots , H_s^{d - r})$ we have
$$
\dim_k  (J_d) = \min \{ d+1, s(r+1) \} = \min\{ d,sr+(s-1) \} + 1 .
$$

Since we know that $\dim \sigma_s({\mathbb X}_M) \leq \min \{ d, sr + (s-1) \}$, it  follows (by semicontinuity) that 
$$
    \dim (I^\prime_{P_1} + \cdots  + I^\prime_{P_s} ) _d = \min\{ d,sr+(s-1) \} + 1 .
$$
and so $\sigma_s({\mathbb X}_M)$ always has the expected dimension.
\end{proof}

\section{The Secant Line Varieties to $\mathbb X_M$ }

In this section we will find  the dimensions of the secant line varieties of $\mathbb X_M$ for every $M \in {\mathcal M}_{r,d}$ and for every polynomial ring $R = k[x_1, \ldots , x_n]$.  

When $r=1$ this is one part of the complete solution to Waring's Problem solved by Alexander and Hirschowitz in \cite{AH95}, so we will assume that $r \geq 2$.  In the previous section we solved this problem for $n = 2$, so we may now  assume that $n \geq 3$.  Recall that we are also assuming $r\geq 2$ and $d \geq 3$.

The main result of this paper is the following theorem.

\begin {thm} \label{sec2varn}
Let $R= k[x_1,\ldots,x_n]$, let $M \in \mathcal M_{r,d}$,  
$n \geq 3$ , $r\geq 2$, $d \geq 3$,

$$M= Z_1^{d_1}\cdots Z_r^{d_r}.$$
Then $\sigma_2 (\mathbb X_M)$  is not defective, except for $M= Z_1^2 Z_2$ and $n=3$. For this last case $\mathbb X_M$ has 2-defect equal to 1.
\end {thm}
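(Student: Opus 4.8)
The plan is to run Terracini's Lemma through the tangent-space description of Corollary~\ref{tangrvar}. For two general points $P_1 = [L_1^{d_1}\cdots L_r^{d_r}]$ and $P_2 = [N_1^{d_1}\cdots N_r^{d_r}]$ of $\mathbb{X}_M$, Terracini's Lemma gives $\dim\sigma_2(\mathbb{X}_M) = \dim_k(I_{P_1}+I_{P_2})_d - 1$, which I would rewrite by the Grassmann formula as
$$\dim_k(I_{P_1}+I_{P_2})_d = 2\bigl(r(n-1)+1\bigr) - \dim_k(I_{P_1}\cap I_{P_2})_d,$$
using that $\dim_k(I_{P_i})_d = r(n-1)+1$ by Remark~\ref{remark1}. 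Since the expected dimension is $\min\{2r(n-1)+1,\ \binom{d+n-1}{n-1}-1\}$, non-defectiveness of $\sigma_2(\mathbb{X}_M)$ is equivalent to the intersection being as small as possible in degree $d$, namely
$$\dim_k(I_{P_1}\cap I_{P_2})_d = \max\Bigl\{0,\ 2\bigl(r(n-1)+1\bigr) - \binom{d+n-1}{n-1}\Bigr\}.$$
Thus everything reduces to computing this intersection.

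The crucial step is an algebraic reduction of that intersection. Writing $A = L_1^{d_1-1}\cdots L_r^{d_r-1}$ and $B = N_1^{d_1-1}\cdots N_r^{d_r-1}$, Remark~\ref{remark1} and Corollary~\ref{tangrvar} give $I_{P_1} = A\cdot J_L$ and $I_{P_2} = B\cdot J_N$, where $J_L = \bigcap_{i<j}(L_i,L_j)$ and $J_N = \bigcap_{i<j}(N_i,N_j)$ are the ideals of the two arrangements of codimension-two linear spaces $V_L = \bigcup_{i<j}\{L_i=L_j=0\}$ and $V_N = \bigcup_{i<j}\{N_i=N_j=0\}$. Because the $2r$ forms are general, $A$ and $B$ are coprime, so any $\Phi\in(I_{P_1}\cap I_{P_2})_d$ is divisible by $AB$ (of degree $2(d-r)$); writing $\Phi = AB\psi$ with $\deg\psi = 2r-d$ and using that $B$ (resp. $A$) is a unit modulo each prime $(L_i,L_j)$ (resp. $(N_i,N_j)$), I would show $\Phi\mapsto\psi$ is an isomorphism
$$(I_{P_1}\cap I_{P_2})_d \ \cong\ (J_L\cap J_N)_{2r-d}.$$
Hence the problem becomes computing the dimension of the space of forms of degree $e := 2r-d$ vanishing on $V_L\cup V_N$.

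The easy regimes are then immediate: if $d>2r$ then $e<0$ and the intersection is $0$; if $d=2r$ then $e=0$ and a nonzero constant cannot vanish on the nonempty variety $V_L\cup V_N$, so again it is $0$. Both match the expected value, so $\sigma_2(\mathbb{X}_M)$ is non-defective there. The heart of the argument is the range $1\le e\le r$, where I would estimate $\dim_k(J_L\cap J_N)_e$ directly. The cleanest sub-case, $e=1$, already exhibits the exception: for $r\ge 3$ the ideal $J_L$ is generated in degree $r-1\ge 2$, so $(J_L)_1 = 0$ and the intersection vanishes; for $r=2$ one has $(J_L)_1 = \langle L_1,L_2\rangle$ and $(J_N)_1 = \langle N_1,N_2\rangle$, two general two-dimensional subspaces of $R_1\cong k^n$, whose intersection is nonzero precisely when $n=3$. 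This is exactly $M=Z_1^2Z_2$ in $\mathbb{P}^2$, where $\{L_1=L_2=0\}$ and $\{N_1=N_2=0\}$ are two points lying on a unique line; this gives $\dim_k(J_L\cap J_N)_1 = 1$ and hence $2$-defect equal to $1$. For the remaining values $2\le e\le r$ I would show that the special arrangements $V_L\cup V_N$ still impose the expected number of conditions on degree-$e$ forms, treating the filling and non-filling regimes uniformly through the displayed formula.

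The main obstacle will be precisely this last count for $2\le e\le r$. The spaces $\{L_i=L_j=0\}$ are far from general position: for $n=3$ they are the $\binom{r}{2}$ pairwise nodes of an arrangement of $r$ general lines, and for larger $n$ the $\binom{r}{2}$ spaces meet along common lower-dimensional faces, so one cannot simply invoke that general linear spaces impose independent conditions. I would control $\dim_k(J_L\cap J_N)_e$ by a specialization and semicontinuity argument combined with a configuration-specific bookkeeping of the conditions imposed by the two arrangements, checking in each case that no unexpected form survives. The delicate part lies near the boundary between the filling and non-filling regimes, where one must confirm that the only genuinely defective configuration is the one already isolated at $e=1$, $r=2$, $n=3$.
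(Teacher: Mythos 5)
Your setup coincides with the paper's: Terracini plus the Grassmann/exact-sequence reduction to $\dim_k(I_{P_1}\cap I_{P_2})_d$, the factorization $\Phi=AB\psi$ with $\deg\psi=2r-d$, and the identification of the residual problem as computing $(J_L\cap J_N)_{2r-d}$ are all exactly what the paper does, and your treatment of $d\ge 2r$ and of $e=1$ (including the isolation of the exception $r=2$, $n=3$, $M=Z_1^2Z_2$) is correct. But there is a genuine gap: the entire range $2\le e\le r$, i.e.\ $r\le d\le 2r-2$, is exactly where the content of the theorem lies, and you leave it as a ``main obstacle'' with only a vague plan (``specialization and semicontinuity combined with configuration-specific bookkeeping''). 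That plan is not a proof, and the difficulty you yourself flag --- the $\binom{r}{2}$ spaces $\{L_i=L_j=0\}$ are far from general position and need not impose independent conditions --- is real, so nothing in your write-up closes it.

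The idea you are missing is a hyperplane-restriction argument that collapses this whole range. Since $\deg\psi=2r-d<2r$, the form $\psi$ cannot be divisible by all of $L_1,\dots,L_r,N_1,\dots,N_r$; say $L_1\nmid\psi$ and set $H=\{L_1=0\}$. Because $\psi$ vanishes on each codimension-two space $\{L_1=L_j=0\}$ for $j=2,\dots,r$, and these are $r-1$ general hyperplanes \emph{inside} $H$, the nonzero restriction $\psi|_H$ of degree $2r-d$ must be divisible by their product; hence $2r-d\ge r-1$, i.e.\ $d\le r+1$. Together with $d>r$ (the split case $d=r$ is disposed of by citing Shin's theorem, not by your uniform count), only $d=r+1$ survives. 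Then $\psi|_H$ has degree exactly $r-1$ and equals that product, so it cannot also vanish on the traces $H\cap\{N_i=N_j=0\}$ unless those traces are empty, which forces $n\le 3$; for $n=3$ a short count in $\mathbb{P}^2$ rules out $r\ge 3$ and exhibits the defect for $r=2$. Without this step (or an equivalent one), your argument establishes the theorem only for $d\ge 2r-1$ and does not prove the general case.
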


\begin {proof} We always have $d \geq r$. The case $d=r$ is covered in  \cite[Theorem 4.4] {shin2012} so we may as well assume that $d>r$ also.

By Terracini's Lemma we need to find the vector space dimension of 
$$(I_{P_1}+I_{P_2})_d,$$
where $P_1 = [L_1^{d_1} \cdots L_r^{d_r}]$ and $P_2 = [N_1^{d_1} \cdots N_r^{d_r}]$ are points of $\X_M$ and the 
$$\{L_i , 1 \leq i \leq r\}, \{N_i, 1 \leq i \leq r\}$$ are general sets of linear forms in $R$.

By Corollary \ref{tangrvar} we obtain
$$I_{P_1} = (L_1^{d_1-1} \cdots L_r^{d_r-1}) \cap (\cap_{1 \leq i <j\leq r} (L_i,L_j)^{d_i+d_j-1} )
,$$
$$I_{P_2} = (N_1^{d_1-1} \cdots N_r^{d_r-1}) \cap (\cap_{1 \leq i <j\leq r} (N_i,N_j)^{d_i+d_j-1} )
.$$
By the exact sequence
\begin{equation}\label{exact sequence}
0 \longrightarrow (I_{P_1} \cap I_{P_2})_d 
\longrightarrow    (I_{P_1} \oplus I_{P_2})_d
\longrightarrow    (I_{P_1} + I_{P_2})_d
\longrightarrow 0,  
\end{equation}
and the fact that we know  that  $\dim (I_{P_i})_d = r(n-1)+1$, ($i=1,2$), 
(see Remark \ref{remark1}), it is enough to find $\dim 
 (I_{P_1} \cap I_{P_2})_d .$
 
 Recall  that the expected dimension of  $\sigma_2 (\mathbb X_M)$ is
  $$\hbox {exp.dim }  \sigma_2 (\mathbb X_M) = \min \{ 2 r(n-1) + 1, 
{ d+n-1 \choose n-1} -1 \}.$$

Note that, if $\dim (I_{P_1} \cap I_{P_2})_d =0$, then by \eqref{exact sequence} the dimension of 
$\sigma_2 (\mathbb X_M)$ is as expected.

Let $V$  be the subscheme of $\mathbb P^{n-1}$ defined by 
$ I_{P_1} \cap I_{P_2} $
and let $f$ be a form of degree $d$ in  $ I_{P_1} \cap I_{P_2}$. Clearly 
$$f= L_1^{d_1-1} \cdots L_r^{d_r-1}\cdot N_1^{d_1-1} \cdots N_r^{d_r-1} \cdot g$$
where $g$ is a form of degree $d-2(d-r)= 2r-d$.

If $2r-d<0$, of course there are no forms of this degree, hence 
$( I_{P_1} \cap I_{P_2})_d=0$ and we are done.
So assume $2r-d\geq 0$.

The form $g$ 
 vanishes on the residual scheme $W$ of $V$ with respect to the $2r$   multiple hyperplanes $\{ L_i=0\}$ and $\{ N_i=0\}$ ($1 \leq i \leq r$). 
It is easy to see that $W$ is defined by the ideal 
$$(\cap_{1 \leq i <j\leq r} (L_i,L_j) )
\cap (\cap_{1 \leq i <j\leq r} (N_i,N_j) ).
$$
Now,  $g$ cannot be divisible by all the $L_i$ and $N_j$ because otherwise it would have degree at least $2r$. 

Without loss of generality assume that $g$ is not divisible by  $L_1$, and let $H$ be the hyperplane defined by $L_1$.
The form $g$ cuts out on $H$ a hypersurface $\mathcal S$ of $H$ having degree $2r-d$, and containing the $r-1$ hyperplanes of $H$ cut out by $L_2, \ldots,L_r$.
Hence, in order for $g$ to exist, $2r-d $ has to be at least $ r-1$, that is, $d\leq r+1$.   But we are assuming $d\geq r+1$, so we get that $d=r+1$.

It follows that $\mathcal S$ has degree $r-1$, contains the $r-1$ hyperplanes of $H$ cut out by $L_2, \ldots,L_r$,  and  contains the trace on $H$ of the schemes defined by the ideals $(N_i,N_j)$. Since the $N_i$ are generic with respect to $H$ and to the $L_i$'s, the only possibility for $g$ to exist is  that the schemes $Y_{i,j}$ defined by the ideals $(N_i,N_j)$ do not intersect $H$.
Since $H \simeq \mathbb P^{n-2} \subseteq \mathbb P^{n-1}$  and 
$Y_{i,j} \simeq \mathbb P^{n-3} \subseteq \mathbb P^{n-1}$, then 
$H \cap Y_{i,j} = \emptyset$ only for $n\leq3$. 

Therefore, we are left with the following cases:

Case 1: $n=3$, $r\geq 3$, $d=r+1$;

Case 2: $n=3$, $r= 2$, $d=3$.

In Case 1, the form $g$ has degree $2r - d = r - 1$ and it should vanish at the
scheme $W$ which is a union of $2{r \choose 2}$ simple points in $\PP^2$. The dimension of the space of homogeneous polynomials of degree $r - 1$ is ${r+1 \choose 2}$, which is smaller than $2{r \choose 2}$, for $r \geq 3$. 
Then, such a $g$ cannot exist.

In Case 2, $M = Z_1^2Z_2$, the form $g$ has degree $2r-d= 1$ and the scheme $W$ is the union of 2  points of $\mathbb P^2$. Hence  $g $ can exist and describes  the line through the two points. It follows that  $\dim (I_{P_1} \cap I_{P_2})_3 =1$. So from \eqref{eq:ideal} we get
$$\dim  (I_{P_1} + I_{P_2})_3 = 9,$$
that is, $\dim \sigma _2 (\mathbb X_M) =8$.
But
$\hbox {exp.dim} \sigma _2 (\mathbb X_M) =9,$ and so 
 $\mathbb X_M$ has $2$-defect $=1$ and we are done.

\end{proof}
\begin {rem}
 The exceptional case noted above was observed in  \cite{CGG0} in connection with the study of the secant varieties of the  tangential varieties to Veronese varieties.
\end{rem}

\begin{ex}

  We claim that the hypersurface in $\mathbb P^9$ containing all those cubic forms of $k[x,y,z]$
which can be written $L_1^2L_2+N_1^2N_2$  (with the $L_i,N_i$ linear forms) is precisely the
hypersurface in $\mathbb P^9$ containing all singular cubics. It is well-known that the (closure) of the set of cubic plane curves with a double point is a hypersurface in $\mathbb P^9$. It will be enough to show that every nodal cubic can be written in the desired form  (since cuspidal cubics can, after a change of variables,  always be written in the form $y^3+x^2z$).

First recall (see  \cite {fulton})  that every nodal cubic can, after a change of variables, be written in the form
$$xyz-x^3-y^3.
$$
With a further change of variables given by
$$ x = -X-Y; \ \ \ \  y=X-Y; \ \ \  z= -Z,$$
we get
$$X^2(6Y+Z) +Y^2( 2Y-Z)
.$$

\end {ex}


\section{Inductive results}

We hope that our results above could provide a starting point for further 
investigations on $\M$-decompositions, when the number $s$ of summands increase.
The main obstruction to extend plainly our arguments resides in the fact that when $r$ is close 
to $n$ and $s>2$, then the shape of polynomials lying in the intersection of a tangent space 
$T_{P}$  with the {\it span} of two of more tangent spaces to $\X_M$ is not easy to control.

In this final section, we show how by Theorem \ref{Binary} 
and Theorem \ref{sec2varn}  we can improve our knowledge on the defectivity
of $\M$-decompositions.
We will need some preliminary algebraic remarks.

\begin{prop} \label{alg1} Let $I,J$ be ideals in the polynomial ring
$R=k[x_1,\ldots,x_n]$, both generated by elements of degree $d-1$, for some $d\geq 3$. 
Assume that $I_d\cap J_d=(0)$. Then after adding one variable $y$
to $R$, we still get $IR[y]_d\cap JR[y]_d=(0)$.
\end{prop}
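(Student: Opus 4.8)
The plan is to let the grading by powers of the new variable $y$ do all the work, reducing the whole statement to a companion fact one degree lower. Set $S = R[y]$, and write $I = (f_1,\ldots,f_a)$ and $J = (g_1,\ldots,g_b)$ with all generators of degree $d-1$. The first step is to make the degree-$d$ piece of the extended ideal completely explicit. Since $I$ is generated in degree $d-1$, every homogeneous element of $IS$ of degree $d$ has the form $\sum_i h_i f_i$ with $h_i \in S_1$. Splitting $h_i = \ell_i + c_i y$ with $\ell_i \in R_1$ and $c_i \in k$ gives
$$ \sum_i h_i f_i = \Big(\sum_i \ell_i f_i\Big) + y\Big(\sum_i c_i f_i\Big), $$
where the first summand lies in $I_d$ and the second is $y$ times an element of $I_{d-1}$. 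Conversely both pieces evidently occur and can be chosen independently, so $(IS)_d = I_d \oplus y\, I_{d-1}$, the sum being direct because the two summands sit in distinct $y$-graded components of $S_d = R_d \oplus yR_{d-1}\oplus\cdots$. The identical description holds for $J$.

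Next I would record the key degree-lowering step: the hypothesis $I_d \cap J_d = (0)$ forces $I_{d-1} \cap J_{d-1} = (0)$. Indeed, if $Q$ were a nonzero form in $I_{d-1} \cap J_{d-1}$, then $x_1 Q$ would lie in $I_d$ (as $Q \in I_{d-1}$) and in $J_d$ (as $Q \in J_{d-1}$), hence in $I_d \cap J_d = (0)$; since $R$ is a domain this gives $Q = 0$, a contradiction.

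Finally I would take $F \in (IS)_d \cap (JS)_d$ and compare the two expansions. Writing $F = P + yQ = P' + yQ'$ with $P \in I_d$, $Q \in I_{d-1}$, $P' \in J_d$, $Q' \in J_{d-1}$, and matching $y$-graded components, I get $P = P' \in I_d \cap J_d = (0)$ and $Q = Q' \in I_{d-1} \cap J_{d-1}$, the latter being $(0)$ by the previous paragraph. Hence $F = 0$, which is exactly $IR[y]_d \cap JR[y]_d = (0)$.

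The argument carries no heavy computation; the one point that requires care — and the step I expect to be the crux — is the degree-lowering claim $I_{d-1}\cap J_{d-1}=(0)$, since the hypothesis is phrased only in degree $d$. Everything else is bookkeeping about the $y$-grading, and it is precisely the assumption that $I$ and $J$ are generated in the single degree $d-1$ that makes the clean splitting $(IS)_d = I_d \oplus y\, I_{d-1}$ valid; without it the degree-$d$ piece of the extension could pick up contributions from higher-degree generators that the argument does not track.
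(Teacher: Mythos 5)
Your proof is correct and follows the same route as the paper: write each degree-$d$ element of the extended ideal as $B+yA$ with $B$ in the degree-$d$ piece and $A$ in the degree-$(d-1)$ piece of the original ideal, and match $y$-graded components. In fact you are more careful than the paper, whose proof concludes ``$A=B=0$ by assumption'' even though the matched component $A=C$ lies in $I_{d-1}\cap J_{d-1}$ rather than in $I_d\cap J_d$; your degree-lowering step (multiply a putative nonzero element of $I_{d-1}\cap J_{d-1}$ by $x_1$ to land in $I_d\cap J_d=(0)$ and use that $R$ is a domain) is precisely the justification the paper leaves implicit.
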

\begin{proof}
By our assumptions on the generators of $I$,
the elements of $IR[y]_d$ are of the form $Ay+B$, with
$A\in I_{d-1}$ and $B\in I_d$. Similarly the elements of $JR[y]_d$ are of the 
form $Cy+D$, with $C\in J_{d-1}$ and $D\in J_d$. Since $y$ is independent mod $R$,
the equality $Ay+B=Cy+D$ yields $A=C$, $B=D$. Hence $A=B=0$ by assumption.
\end{proof}

Next, fix $d\geq 3$ and  go back to the ideals $I_P$ 
defined in the previous sections, 
where $P$ is a splitting form of fixed type:
\[M =Z_1^{d_1}\cdots Z_r^{d_r},\qquad d=d_1+\dots +d_r.\]

\begin{prop} \label{alg2} Assume that for a general choice of
splitting forms $P_1,\dots, P_s\in k[x_1,\ldots ,x_t]$ of type $M$ in $t < n$ variables
the ideal $I=I_{P_1}+\dots +I_{P_s}\subset k[x_1,\ldots ,x_t]$ has the degree $d$ piece
of (maximal) dimension $s(r+1)$.
Then for a general choice of splitting forms $Q_1,\dots, Q_s\in R$ 
of type $M$, the ideal $J=I_{Q_1}+\dots +I_{Q_s}\subset R$ has a degree $d$ piece
of (maximal) dimension $sr(n-1)+s$. 
\end{prop}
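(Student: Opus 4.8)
The plan is to reduce the statement to the assertion that, for a general choice of the $Q_i$, the subspaces $(I_{Q_i})_d\subseteq R_d$ are linearly independent (that is, their sum is direct), and then to establish this independence by adding the $n-t$ missing variables one at a time, upgrading Proposition \ref{alg1} from two ideals to $s$ ideals and combining it with a semicontinuity argument. First I would record the routine reductions. By Remark \ref{remark1} each $(I_{Q_i})_d$ has dimension $r(n-1)+1$, so $\dim(I_{Q_1}+\dots+I_{Q_s})_d\le sr(n-1)+s$, with equality precisely when the $s$ subspaces are independent; thus the claim is equivalent to independence. Each $I_P$ is generated in degree $d-1$: by \eqref{eq:ideal} the $r$ generators $F/L_i$ have degree $d-1$, so $(I_P)_j=0$ for $j<d-1$ and $(I_P)_d=R_1\cdot(I_P)_{d-1}$. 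I would also note the automatic fact that independence of the $(I_{P_i})_d$ forces independence of the $(I_{P_i})_{d-1}$: a nontrivial relation $\sum_i A_i=0$ with $A_i\in (I_{P_i})_{d-1}$ would, upon multiplying by any nonzero $\ell\in R_1$, yield the nontrivial relation $\sum_i \ell A_i=0$ with $\ell A_i\in (I_{P_i})_d$ (the polynomial ring being a domain). Hence it is enough to propagate independence in degree $d$.

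The core is an $s$-fold version of Proposition \ref{alg1}. Suppose $I^{(1)},\dots,I^{(s)}$ are generated in degree $d-1$ and that both families $\{(I^{(i)})_{d-1}\}$ and $\{(I^{(i)})_d\}$ are independent in $R$. Then $(I^{(i)}R[y])_{d-1}=(I^{(i)})_{d-1}$ is unchanged, and $(I^{(i)}R[y])_d=(I^{(i)})_d\oplus y\,(I^{(i)})_{d-1}$, so a relation $\sum_i (A_i y+B_i)=0$ in $R[y]$ splits, by independence of $y$ over $R$, into $\sum_i A_i=0$ and $\sum_i B_i=0$; the two independence hypotheses then force every $A_i=B_i=0$. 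This is exactly the computation of Proposition \ref{alg1}, now read across $s$ summands, and it keeps both families independent in $R[y]$. Combined with the automatic implication above, it gives the clean inductive step: degree $d$ independence in $m$ variables implies degree $d$ independence in $m+1$ variables.

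To finish I would pass from special to general configurations by semicontinuity. The function $\dim(I_{Q_1}+\dots+I_{Q_s})_d$ is lower semicontinuous in $(Q_1,\dots,Q_s)$ (it is the rank of a matrix with polynomial entries), so it suffices to exhibit one configuration attaining $sr(n-1)+s$. I take the $Q_i$ defined by general linear forms in the subring $k[x_1,\dots,x_t]$; this is a specialization of a general configuration, and for such a point Proposition \ref{spaziotang} shows that $I_{Q_i}=I^{(i)}R$ is the extension of the corresponding $t$-variable tangent ideal. A direct count shows that the tangent directions coming from the new variables $x_{t+1},\dots,x_n$ supply exactly $(n-t)r$ extra dimensions, namely $\dim(I^{(i)}R)_d=\bigl(r(t-1)+1\bigr)+(n-t)r=r(n-1)+1$, so each special point still has a full-dimensional tangent space. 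Starting from the hypothesis (independence in $t$ variables) and applying the $s$-fold extension lemma $n-t$ times yields independence of $\{(I_{Q_i})_d\}$ in $R$, hence $\dim(I_{Q_1}+\dots+I_{Q_s})_d=sr(n-1)+s$ at this configuration; by semicontinuity the general configuration attains at least this value, and the upper bound $sr(n-1)+s$ forces equality.

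The step I expect to be the main obstacle is precisely this special-to-general transition. One must be certain that the specialized points still lie on $\mathbb{X}_M$ as smooth points whose tangent space has the full expected dimension $r(n-1)+1$ — otherwise the target dimension could never be reached at the special configuration — and that their tangent ideals are genuinely the extensions $I^{(i)}R$, so that the $s$-fold form of Proposition \ref{alg1} applies verbatim. The bookkeeping that the new-variable directions contribute exactly the missing $(n-t)r$ dimensions is the technical heart of the argument; by comparison, the generalization of Proposition \ref{alg1} from two ideals to $s$ ideals is entirely routine.
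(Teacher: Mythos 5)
Your proof is correct and follows essentially the same route as the paper's: specialize the points to the subring $k[x_1,\dots,x_t]$, extend one variable at a time using the decomposition $(IR[y])_d=I_d\oplus yI_{d-1}$ for ideals generated in degree $d-1$, count $\dim(I^{(i)}R)_d=(r(t-1)+1)+(n-t)r=r(n-1)+1$ at the specialized configuration, and conclude by semicontinuity; the paper merely packages the independence step as an induction on $s$, applying the two-ideal Proposition \ref{alg1} to the pair $I_{P_1}+\dots+I_{P_{s-1}}$ and $I_{P_s}$ a total of $n-t$ times, rather than using your $s$-fold version of that lemma. Your explicit remark that degree-$d$ independence forces degree-$(d-1)$ independence (multiply a putative relation by a nonzero linear form) is a worthwhile addition, since the paper's proof of Proposition \ref{alg1} deduces $A=C=0$ from the hypothesis $I_d\cap J_d=(0)$ alone and silently relies on exactly this fact.
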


\begin{proof}
We make induction on $s$. Notice that it is enough to prove that 
 the degree $d$ piece of
the extended ideal $IR$ has dimension $sr(n-1)+s$, because then,
by semicontinuity, the same will hold for a general choice of the $Q_i$'s.

For the case $s=1$, take $P_1=L_1^{d_1}\cdots L_r^{d_r}$,
where the $L_i$'s are general linear forms in $k[x_1,\ldots ,x_t]$. 
The vector space $(IR)_d$ is spanned by  the $(t-1)r+1$ independent forms in the $t$ variables $x_1, \ldots ,x_t$ which are a basis for $I_d$, plus the $r (n-t)$ independent forms $x_iP/L_j$ for all $i=t+1,\dots n$ and $j=1,\dots r$. It follows that $\dim (IR)_d =(t-1)r+1+r (n-t)= r(n-1)+1.$

For higher $s$, notice that our assumption implies that $I'=I_{P_1}+\dots +I_{P_{s-1}}$ has a null intersection with $I_{P_s}$
in degree $d$, inside $k[x_1,\ldots ,x_t]$. Thus  also the extensions of
$I'$ and $I_{P_s}$ to $R$ have a null intersection (apply $n-t$ times the previous
Proposition \ref{alg1}). Since, by the inductive hypothesis, 
$\dim( I' R)_d =(s-1) r (n-1) + (s-1)$, we have that
$$\dim( I R)_d =(s-1) r (n-1) + (s-1) +r (n-1)+1= s r (n-1) + s,$$
and
the result follows.

\end{proof}

Now we can apply the previous proposition to extend 
 Theorem \ref{Binary} to many variables.
 
\begin{prop} \label{ext1} Let $R = k[x_1,\dots,x_n]$ and 
let $M = Z_1^{d_1}\cdots Z_r^{d_r} \in {\mathcal M}_{r,d}$, $r \leq d$.
Assume $s(r+1)\leq d+1$.
 Then, for every $M$, 
$\sigma_s({\mathbb X}_M)$  has the expected  dimension 
\[
\dim \sigma_s({\mathbb X}_M) = s\dim {\mathbb X}_M + (s-1)=sr(n-1)+s-1.
\]

\end{prop}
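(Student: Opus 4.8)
The plan is to deduce the statement from the binary case (Theorem \ref{Binary}) via the variable-extension mechanism packaged in Proposition \ref{alg2}, so that the whole argument reduces to matching numerical invariants. First I would dispose of the case $n=2$, where the assertion is literally Theorem \ref{Binary}: indeed the hypothesis $s(r+1)\leq d+1$ is equivalent to $sr+s-1\leq d$, which is precisely the regime in which the minimum $\min\{sr+s-1,d\}$ appearing in Theorem \ref{Binary} is attained by its first term. So from then on I assume $n\geq 3$ and set $t=2<n$, positioning myself to invoke Proposition \ref{alg2}.

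The second step is to check that the hypothesis of Proposition \ref{alg2} holds with $t=2$. By Corollary \ref{potenza2var}, for a general point $P=[L_1^{d_1}\cdots L_r^{d_r}]$ of $\mathbb X_M$ in two variables the tangent space is $\mathbb P((I'_P)_d)$ with $I'_P=(L_1^{d_1-1}\cdots L_r^{d_r-1})$ a principal ideal generated in degree $d-r$; hence $\dim_k (I'_P)_d=\dim_k R_r=r+1$. For $s$ general such points the sum $(I_{P_1}+\cdots+I_{P_s})_d$ therefore has dimension at most $s(r+1)$, and Terracini's Lemma identifies $\dim\sigma_s(\mathbb X_M)+1$ in the binary case with exactly this dimension. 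Since Theorem \ref{Binary} gives $\dim\sigma_s(\mathbb X_M)=sr+s-1$ under the numerical assumption, I conclude $\dim_k(I_{P_1}+\cdots+I_{P_s})_d=s(r+1)$, which is the maximal value required as input to Proposition \ref{alg2}.

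The third step is the actual extension: feeding these binary data into Proposition \ref{alg2} yields, for a general choice of splitting forms $Q_1,\dots,Q_s\in R=k[x_1,\dots,x_n]$ of type $M$, the equality $\dim_k(I_{Q_1}+\cdots+I_{Q_s})_d=sr(n-1)+s$. Finally, Terracini's Lemma together with Remark \ref{remark1} (which records $\dim(I_{Q_i})_d=r(n-1)+1$, i.e. $\dim\mathbb X_M=r(n-1)$) converts this into $\dim\sigma_s(\mathbb X_M)=sr(n-1)+s-1=s\dim\mathbb X_M+(s-1)$, the expected dimension.

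The step I expect to require the most care is the second one: the engine is entirely numerical, and everything hinges on $s(r+1)\leq d+1$ being exactly the threshold below which the maximal value $s(r+1)$ is achieved rather than capped by $\dim_k R_d=d+1$. I would double-check that the principality of $I'_P$ in two variables (Corollary \ref{potenza2var}) really forces $\dim_k(I'_P)_d=r+1$ and that this $r+1$ matches the factor in the "$s(r+1)$" appearing as the hypothesis of Proposition \ref{alg2}, so that the two formulations of the binary tangent-space count genuinely agree. The extension step and the closing Terracini bookkeeping are then formal.
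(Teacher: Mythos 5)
Your proof is correct and follows essentially the same route as the paper: reduce to the binary case via Theorem \ref{Binary}, verify that the hypothesis of Proposition \ref{alg2} holds with $t=2$ (using Terracini to translate $\dim\sigma_s(\mathbb X_M)=sr+s-1$ into $\dim_k(I_{P_1}+\cdots+I_{P_s})_d=s(r+1)$), and then extend to $n$ variables. The paper's own proof is just a terser statement of exactly this specialization argument, so your more detailed bookkeeping of the numerical threshold $s(r+1)\leq d+1$ is a faithful (and welcome) expansion rather than a different method.
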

\begin{proof}
The result is true in two variables, by Theorem \ref{Binary}. 
In more than two variables, specialize the $s$ points $P_i$'s to forms
in two variables: the result remains true by Proposition \ref{alg2}.
\end{proof}

The condition $s(r+1)\leq d+1$ is fundamental in the previous argument,
because we need that, after the specialization, each $I_{P_i}$
has a null intersection with the span of the remaining $I_{P_j}$  (in $k[x_1,x_2]$).
We can improve the previous result with the trick of separating the variables. 

\begin{prop} \label{alg3} Split the variables of $R=k[x_1,\dots,x_n]$
in two sets by defining $R_1=k[x_1,\dots,x_j]$ and $R_2=k[x_{j+1},\dots, x_n]$.
Let $I\subset R_1$, $J\subset R_2$ be ideals generated by elements of degree
$d-1$, for some $d\geq 3$. Then $(IR)_d\cap( JR)_d=(0)$.
\end{prop}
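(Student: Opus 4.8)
The plan is to exploit the fact that $R$ carries a natural bigrading coming from the partition of its variables into the two sets $\{x_1,\dots,x_j\}$ and $\{x_{j+1},\dots,x_n\}$. Write $R=\bigoplus_{(a,b)}R_{(a,b)}$, where $R_{(a,b)}$ is the span of the monomials of degree $a$ in the first set of variables and degree $b$ in the second. Both $(IR)_d$ and $(JR)_d$ are bihomogeneous subspaces, and I would prove that they are supported on disjoint sets of bidegrees; since any element of the intersection is bihomogeneous, this forces the intersection to vanish.

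First I would describe $(IR)_d$. Because $I$ is generated by forms of degree $d-1$ all lying in $R_1=k[x_1,\dots,x_j]$, every element of $(IR)_d$ is a combination $\sum_i \ell_i g_i$ with $\ell_i$ linear in $R$ and $g_i\in I_{d-1}$. Splitting each $\ell_i$ into its component in $\{x_1,\dots,x_j\}$ and its component in $\{x_{j+1},\dots,x_n\}$ gives $(IR)_d = I_d \oplus \bigl(\sum_{k=j+1}^n x_k\,I_{d-1}\bigr)$, where the summand $I_d\subset R_1$ has bidegree $(d,0)$ and $\sum_k x_k I_{d-1}$ has bidegree $(d-1,1)$. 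Hence $(IR)_d$ is supported entirely in the two bidegrees $(d,0)$ and $(d-1,1)$. By the symmetric computation, with the roles of the two variable sets interchanged, $(JR)_d$ is supported entirely in the bidegrees $(0,d)$ and $(1,d-1)$.

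Then I would simply compare the two supports. The set $\{(d,0),(d-1,1)\}$ meets $\{(0,d),(1,d-1)\}$ precisely when $(d-1,1)=(1,d-1)$, that is, when $d=2$; for every other pairing the first coordinates already disagree. Since the hypothesis is $d\ge 3$, the two supports are disjoint, so any bihomogeneous $f\in (IR)_d\cap (JR)_d$ must have all of its bidegree components in the empty intersection of the supports, whence $f=0$, which is the claim.

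There is no serious obstacle here: the only point that demands care is the bookkeeping that pins $(IR)_d$ down to the two extreme bidegrees $(d,0)$ and $(d-1,1)$, together with the observation that it is exactly the assumption $d\ge 3$ (rather than $d\ge 2$) that keeps $(d-1,1)$ and $(1,d-1)$ apart. I would also emphasize that the hypothesis that $I$ and $J$ are generated in degree $d-1$ is essential: lower-degree generators would let $(IR)_d$ spread into intermediate bidegrees such as $(d-2,2)$, and the clean separation of supports would be lost.
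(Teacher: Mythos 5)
Your proof is correct and is essentially the paper's argument: the paper simply observes that elements of $(IR)_d$ have degree at least $d-1$ in $x_1,\dots,x_j$ while elements of $(JR)_d$ have degree at most $1$ there, which for $d\geq 3$ is exactly your disjointness of bidegree supports, stated without the explicit bigrading formalism. Your version just makes the same bookkeeping more explicit.
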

\begin{proof}
Notice that the elements of $(IR)_d$ have degree at least $d-1$ in
$x_1,\dots, x_j$, while the elements of $(JR)_d$ have degree at most $1$ in
$x_1,\dots, x_j$. 
Since $d \geq 3$,  the two things cannot match, and the claim follows.
\end{proof}

Now, by dividing the points $P_1,\dots,P_s$ in 
groups and specializing each group to forms in two distinct variables, we get

\begin{prop} \label{ext2} Let $R = k[x_1,\dots,x_n]$ and 
let $M = Z_1^{d_1}\cdots Z_r^{d_r} \in {\mathcal M}_{r,d}$, $r \leq d$,
and $d\geq3$.
Set $m = \left\lfloor \frac{n}{2} \right\rfloor$ and 
$s'=\left\lfloor\frac{d+1}{r+1}\right\rfloor$.
Then, for every $s\leq s'm$ and for every  $M$, $\sigma_s({\mathbb X}_M)$  
has the expected  dimension 
\[
\dim \sigma_s({\mathbb X}_M) = s\dim {\mathbb X}_M + (s-1)=sr(n-1)+s-1.
\]

\end{prop}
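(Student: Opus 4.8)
The plan is to reduce, by semicontinuity, to producing one explicit configuration of $s$ splitting forms $Q_1,\dots,Q_s$ of type $M$ for which $\dim\big(I_{Q_1}+\dots+I_{Q_s}\big)_d$ attains the maximal value $sr(n-1)+s$; by Terracini's Lemma together with the exact sequence used in Section 4, this equality is equivalent to $\sigma_s(\mathbb{X}_M)$ having the stated expected dimension. If $n=2$ then $m=1$ and the assertion is exactly Theorem \ref{Binary}, so I would assume $n\ge 3$, which in particular allows the extension Proposition \ref{alg2} to be applied with $t=2<n$.

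First I would partition the $n$ variables into $m=\lfloor n/2\rfloor$ disjoint pairs $\{x_1,x_2\},\{x_3,x_4\},\dots,\{x_{2m-1},x_{2m}\}$ (ignoring $x_n$ when $n$ is odd). Since $s\le s'm$, the $s$ points can be distributed among the groups so that the $g$-th group carries $s_g\le s'$ of them, with $s_1+\dots+s_m=s$; for the $g$-th group I choose the defining linear forms to be general forms in the two variables $x_{2g-1},x_{2g}$ only, and let $I^{(g)}\subset k[x_{2g-1},x_{2g}]$ denote the sum of the corresponding ideals $I_{Q_i}$.

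The argument then has two parts. For a single group, since $s_g(r+1)\le s'(r+1)\le d+1$, Theorem \ref{Binary} gives the full two-variable dimension $\dim(I^{(g)})_d=s_g(r+1)$, and Proposition \ref{alg2} (with $t=2$) upgrades this to $\dim(I^{(g)}R)_d=s_g r(n-1)+s_g$. To add these contributions I must show the subspaces $(I^{(g)}R)_d\subseteq R_d$ are independent. For this I would use the variable separation of Proposition \ref{alg3}: for each fixed $g$, the ideal $\sum_{g'<g}I^{(g')}$ lies in $k[x_1,\dots,x_{2(g-1)}]$ while $I^{(g)}$ lies in the complementary variables $k[x_{2g-1},\dots,x_n]$, and both are generated in degree $d-1$; hence Proposition \ref{alg3} yields $(I^{(g)}R)_d\cap\sum_{g'<g}(I^{(g')}R)_d=(0)$ for every $g$. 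This triangular vanishing is precisely what forces the sum to be direct, so that $\dim\big(\sum_g I^{(g)}R\big)_d=\sum_g\big(s_g r(n-1)+s_g\big)=sr(n-1)+s$, and the proposition follows.

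I expect the delicate point to be this last independence step: one has to arrange the partition of the variables so that Proposition \ref{alg3} can be applied to $I^{(g)}$ against the \emph{entire} sum of the earlier groups at once, rather than merely pairwise, and to verify that the ideals in play remain generated in degree $d-1$ so that the hypotheses of Proposition \ref{alg3} are met. By contrast, the two-variable input from Theorem \ref{Binary} and the single-group extension from Proposition \ref{alg2} are direct appeals to results already in hand.
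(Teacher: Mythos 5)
Your proposal is correct and follows essentially the same route as the paper: specialize the points into $m=\lfloor n/2\rfloor$ groups supported on disjoint pairs of variables, use Theorem \ref{Binary} for each group in two variables, lift with Proposition \ref{alg2} ($t=2$), and combine the groups via Proposition \ref{alg3}; the only cosmetic difference is that the paper first reduces to $s=s'm$ and puts exactly $s'$ points in each group, whereas you distribute a general $s\le s'm$ with $s_g\le s'$ per group and make the independence step triangular.
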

\begin{proof}
It is enough to prove the statement for $s=s'm$.
For each $j=1,\dots,m$ take $s'$ forms $P_{1j},\dots, P_{s'j}\in \X_M$ 
in the variables $x_{2j-1}, x_{2j}$. The ideal 
\[I_j=(I_{P_{1j}}+\dots +I_{P_{s'j}})\]
in $k[x_{2j-1}, x_{2j}]$ has the expected dimension $s'r+s'$ 
in degree $d$.  By Proposition \ref{alg2} with $t=2$,
the extension of $I_j$ to $R$ 
has the expected dimension $s'r(n-1)+s'$ in degree $d$.

Then the sum $I=I_1+\dots + I_m$ has the expected dimension
$m(s'r(n-1)+s')=sr(n-1)+s$ in degree $d$, because by Proposition \ref{alg3} every
$I_j$ has null intersection with the sum of the remaining $I_k$'s, $k\neq j$.
\end{proof}

We can use the idea of separating the variables also
in connection with  Theorem \ref{sec2varn}, by specializing pairs of points $P_i$'s
to forms in three distinct variables. The statement below covers some cases which 
do not fit with the numerical assumptions of Proposition \ref{ext2}.

\begin{prop}\label{ext3} Let $R= k[x_1,\ldots,x_n]$,  $r\geq 2$, $d \geq 3$.
Let $M \in \mathcal M_{r,d}$ be different from  $Z_1^2 Z_2$.
Then $\sigma_s (\mathbb X_M)$  is not defective for 
$ s \leq 2 \left\lfloor \frac{n}{3} \right\rfloor$ .

\end{prop}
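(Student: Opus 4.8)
The plan is to imitate the variable–separation argument of Proposition \ref{ext2}, but to group the variables three at a time and to use the non-defectivity of $\sigma_2$ from Theorem \ref{sec2varn} as the local building block in place of the binary result Theorem \ref{Binary}. Since non-defectivity in the sub-generic range propagates downward, it suffices to treat the top value $s = 2\lfloor n/3\rfloor$: once $s$ general tangent spaces are shown to span a space of the maximal degree-$d$ dimension $s(r(n-1)+1)$, any sub-collection of $s'\le s$ of them spans a space of dimension $s'(r(n-1)+1)$ (each piece has dimension $r(n-1)+1$ and they cannot overlap if the whole family is independent), which is exactly the sub-generic expected value for $\sigma_{s'}$.

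For the construction I would split the variables of $R$ into $q = \lfloor n/3\rfloor$ disjoint triples, the $j$-th being $\{x_{3j-2},x_{3j-1},x_{3j}\}$, leaving at most two variables unused. In each triple I place two general points $P_{2j-1},P_{2j}$ of $\mathbb{X}_M$, realized by forms in those three variables. Here Theorem \ref{sec2varn} enters: because $M \neq Z_1^2 Z_2$, the variety $\sigma_2(\mathbb{X}_M)$ is non-defective in three variables, and indeed (as the proof of that theorem shows) the two tangent spaces meet trivially in degree $d$, so the local ideal $I_{P_{2j-1}} + I_{P_{2j}}\subset k[x_{3j-2},x_{3j-1},x_{3j}]$ has the maximal degree-$d$ dimension $2(2r+1)$. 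Applying Proposition \ref{alg2} with $t=3$ (its proof is valid verbatim for $t=3$, the local maximal dimension there being $s(2r+1)$), the extension of this local ideal to all of $R$ has degree-$d$ dimension $2(r(n-1)+1)$.

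It then remains to add up the $q$ groups. Since distinct triples involve disjoint sets of variables, Proposition \ref{alg3}, applied with the (un-extended) $j$-th local ideal against the sum of the remaining local ideals, shows that their extensions to $R$ meet trivially in degree $d$; iterating over $j$, the full sum $\sum_{i=1}^{s}I_{P_i}$ is a direct sum in degree $d$ of dimension $q\cdot 2(r(n-1)+1)=s(r(n-1)+1)=sr(n-1)+s$, and by semicontinuity the same maximal dimension holds for a general choice of the $s$ points. As this is the dimension of an honest subspace of $R_d$, it cannot exceed $\binom{d+n-1}{n-1}$, so the expected dimension of $\sigma_s(\mathbb{X}_M)$ is the sub-generic value $sr(n-1)+s-1$, which by Terracini's Lemma is therefore attained; this gives non-defectivity.

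The step I expect to be the main obstacle is ensuring that the three-variable building block lies in the sub-generic range, so that Theorem \ref{sec2varn} yields the trivial intersection $(I_{P_1}\cap I_{P_2})_d=0$ rather than a secant variety that already fills $\mathbb{P}^{\binom{d+2}{2}-1}$ (in which case the hypothesis of Proposition \ref{alg2} would fail). A short count shows $2(2r+1)\le \binom{d+2}{2}$ whenever $d>r$, with equality only for $(r,d)=(2,3)$, that is exactly for $M=Z_1^2Z_2$; this simultaneously explains why that monomial must be excluded — it is precisely the defective three-variable case of Theorem \ref{sec2varn} — and flags the split case $d=r$, where $\sigma_2$ fills up in three variables, as one to be handled separately via Theorem \ref{Binary} and Proposition \ref{ext2}. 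The remaining bookkeeping, namely the at most two leftover variables and the iteration of Proposition \ref{alg3} across the $q$ groups, is routine.
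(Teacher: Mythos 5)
Your proposal follows essentially the same route as the paper: place two general points of $\mathbb{X}_M$ in each of $\lfloor n/3\rfloor$ disjoint triples of variables, use Theorem \ref{sec2varn} (and its proof) to obtain the null intersection in degree $d$ of the two tangent ideals in three variables, extend to $R$ by Proposition \ref{alg2} with $t=3$, and glue the groups with Proposition \ref{alg3}. Your closing caveat about the split case $d=r$ is well taken --- there the proof of Theorem \ref{sec2varn} defers to \cite{shin2012} and does not deliver the null intersection, which in fact fails for small $r$ by a dimension count in $k[x_1,x_2,x_3]$, a point the paper's own proof passes over silently --- though note that your suggested patch via Proposition \ref{ext2} only reaches $s\le\lfloor n/2\rfloor$ when $d=r$, which falls short of $2\lfloor n/3\rfloor$.
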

\begin{proof}
The case $s=2$ is provided by Theorem \ref{sec2varn}. For higher $s$,
it sufficies to prove the statement for $ s = 2 \left\lfloor \frac{n}{3} \right\rfloor$.
For all $i=1,...,\left\lfloor \frac{n}{3} \right\rfloor$ choose general points $P_{2i-1},P_{2i}\in \X_M$ which involve only the 
variables $x_{3i-2},x_{3i-1},x_{3i}$. 
By Theorem \ref{sec2varn} and its proof, we know
that the ideals $I_{P_{2i+1}}, I_{P_{2i+2}}$ have null intersection in degree $d$, considered
as ideals in $k[x_{3i+1},x_{3i+2},x_{3i+3}]$. Thus their sum $I_i = I_{P_{2i-1}} + I_{P_{2i}}$
  has the expected dimension
in degree $d$. The same remains true when we extend the ideals to $R$, 
by Proposition \ref{alg2} with $t=3$. Now it is enough to apply Proposition \ref{alg3}  to show that
the sum of the $I_i$'s has the expected dimension in degree $d$.
 \end{proof}

The results proved in Proposition \ref{ext2} or 
Proposition \ref{ext3} by separating
the variables are clearly not optimal, in the sense that they cannot cover
the full range of all possible $M$-ranks. In order to get a complete classification of
defective varieties $\X_M$ for $s>2$, new ideas should be introduced.

\section*{Acknowledgements} 

The first author wishes to thank Queen’s University, in the person of the third author, 
for their kind hospitality during the preparation of this work. The first and  third authors enjoyed 
support from NSERC (Canada).  The first and second authors  were also supported by GNSAGA of INDAM and by MIUR funds 
(PRIN 2010-11 prot. 2010S47ARA-004 - Geometria delle Variet\`a Algebriche) (Italy). 
The fourth author was supported by Jubileumsdonationen, K \& A Wallenbergs Stiftelse (Sweden) during visits to the first and the third author.

\end{document}